\newtheorem{theorem}{Theorem}
\newtheorem{problem}[theorem]{Problem}
\newtheorem{lemma}[theorem]{Lemma}
\newtheorem{corollary}[theorem]{Corollary}
\newtheorem{definition}[theorem]{Definition}
\newcounter{encoding}
\newtheorem{encoding}[encoding]{Encoding}
\newcommand{\Z}{\mathbb{Z}}
\newcommand{\F}{\mathbb{F}}
\newcommand{\inv}{^{-1}}
\title{Lower Bounds for Book Ramsey Numbers}
\author[1]{William J. Wesley \footnote{University of California, San Diego \tt{wjwesley@ucsd.edu}}}
\begin{document}

\maketitle

\begin{abstract}
    We prove new bounds for Ramsey numbers for book graphs $B_n$. In particular, we show that $R(B_{n-1},B_n) = 4n-1$ for an infinite family of $n$ using a block-circulant construction similar to Paley graphs. We obtain improved bounds for several other values of $R(B_r,B_s)$ using different block-circulant graphs from SAT and integer programming (IP) solvers. Finally, we enumerate the number of critical graphs for $R(B_r,B_s)$ for small $r$ and $s$ using SAT modulo symmetries (SMS). 
\end{abstract}
\section{Introduction}

The \emph{Ramsey number} $R(r,s)$ is the smallest $n$ such that every graph on $n$ vertices contains either a copy of the clique $K_r$ or coclique $\overline{K}_s$. Ramsey numbers are among the most widely studied numbers in combinatorics, yet pinning down their precise values is remarkably difficult. Only nine nontrivial values are known, namely $R(4,4), R(4,5)$, and $R(3,s)$ for $3 \le s \le 9$. 

A natural modification is to consider graphs other than $K_r$ and $K_s$, and instead compute $R(G,H)$, the smallest $n$ such that every graph on $n$ vertices contains either a copy of $G$ or $\overline{H}$. Note that $R(K_r,K_s)$ is simply $R(r,s)$. For suitable choices of $G$ and $H$, the problem of computing $R(G,H)$ becomes much simpler. These generalized Ramsey numbers are of great interest, both for their asymptotics and exact values. A comprehensive dynamic survey of their known values is maintained by Radziszowski \cite{RamseySurvey}.

In this work we restrict our attention to the \emph{book graphs} $B_n = K_2 + \overline{K_n}$, where + denotes the graph join. That is, $B_n$ consists of $n$ triangles that share a common edge (see Figure \ref{Figure_books}). The first systematic study of book Ramsey numbers was done in 1978 by Rousseau and Sheehan \cite{RousseauSheehanRamseyBooksOriginal}. The authors are rather modest and claim that ``the book $(B_n)$ does not have the same status in graph theory as does, for example, the complete graph $(K_n)$, the path $(P_n)$, cycle $(C_n)$, or wheel $(W_n)$". However, in recent years Ramsey numbers involving book graphs have attracted significant attention \cite{Conlon_BookRamsey,ConlonFoxWigderson_OffDiagonalBooks,LiuLiRamseyBooks,ChenLinRamseyBookUpperBounds}, and notably, book graphs are a key ingredient in the recent breakthrough proof that $R(s,s) \le (4 - \epsilon)^s$ \cite{ExponentialImprovementRamsey}. Rousseau and Sheehan proved several exact bounds for families of book Ramsey numbers in \cite{RousseauSheehanRamseyBooksOriginal}. In particular, they gave the following bounds for the diagonal and ``almost diagonal" cases. 
\begin{theorem}[Rousseau-Sheehan] \label{TheoremAlmostDiagonalBookUpperBound} For all $n$, 
\begin{enumerate}[(i)]
    \item $R(B_n,B_n) = 4n+2 \text{ if } q = 4n+1 \text{ is a prime power},$ \\

   \item $R(B_{n-1},B_n) \le 4n-1$, \\
   \item  $R(B_{n-2},B_n) \le 4n-3 \text{ if } n \equiv 2 \pmod 3$. 
\end{enumerate}
\end{theorem}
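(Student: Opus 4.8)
The plan is to prove every upper bound by a single mechanism, and the lower bound in (i) by a known construction. A graph $F$ on $N$ vertices \emph{avoids} $B_r$ in the red colour precisely when every edge lies in at most $r-1$ triangles, and $\overline{F}$ avoids $B_s$ precisely when every non-edge of $F$ (an edge of $\overline F$) lies in at most $s-1$ triangles of $\overline F$. Since book-freeness is hereditary, it suffices for each upper bound to rule out a graph on the relevant number of vertices that is simultaneously $B_r$-free and whose complement is $B_s$-free (a \emph{good} graph). The engine is Goodman's identity,
$$T_F + T_{\overline F} = \binom{N}{3} - \tfrac12\sum_{v} d_v(N-1-d_v),$$
combined with the book bounds $3T_F \le (r-1)e$ and $3T_{\overline F}\le (s-1)\bar e$, where $e,\bar e$ count edges and non-edges and $d_v$ are the degrees. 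Writing $e=\tfrac12\sum_v d_v$, eliminating $T_F+T_{\overline F}$, and rearranging gives the master inequality
$$\frac{N(N-1)(N-s-1)}{6}\ \le\ \frac12\sum_{v} d_v\!\left(N-1-\tfrac{s-r}{3}-d_v\right),$$
which any good graph must satisfy. Here $(r,s)=(n,n),(n-1,n),(n-2,n)$ for parts (i)--(iii).

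The right-hand side is a sum of identical downward parabolas in the integers $d_v$, so I would bound it by its maximum. For part (i) ($s-r=0$) the continuous maximum at $d_v=(N-1)/2$ already forces $N\le 4n+1$, hence $R(B_n,B_n)\le 4n+2$ for \emph{every} $n$; the matching lower bound is the Paley graph $P_q$ on $q=4n+1$ vertices, which is self-complementary and strongly regular with $\lambda=n-1$, hence both $B_n$-free and co-$B_n$-free, so $R(B_n,B_n)\ge 4n+2$. These combine to equality whenever $q=4n+1$ is a prime power.

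For the off-diagonal parts the continuous maximum lands exactly on the threshold: at $N=4n-1$ (part (ii)) and $N=4n-3$ (part (iii)) the master inequality holds with \emph{equality} when all degrees equal the nearest integer to the parabola's vertex, so the final vertex must be extracted from integrality. In part (ii) the forced common degree is $2n-1$; but $\sum_v d_v=2e$ is even while $(4n-1)(2n-1)$ is odd, so the degrees cannot all equal $2n-1$, the parabola bound is strict, and the master inequality fails, ruling out $N=4n-1$ and giving $R(B_{n-1},B_n)\le 4n-1$. In part (iii) equality throughout forces $F$ to be $(2n-2)$-regular \emph{and} to have every edge in exactly $n-3$ triangles, whence $3T_F=(n-3)e=(n-3)(n-1)(4n-3)$; when $n\equiv 2\pmod 3$ this product is $\equiv 1\pmod 3$, so $T_F$ is not an integer, a contradiction ruling out $N=4n-3$ and yielding $R(B_{n-2},B_n)\le 4n-3$.

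The main obstacle is exactly this last step. The Goodman-plus-convexity estimate is tight at the threshold, so its continuous relaxation proves only $R\le 4n$ and $R\le 4n-2$ in (ii) and (iii); everything hinges on showing the extremal degree sequence is genuinely unrealizable. I would therefore concentrate the work on verifying that equality in the master inequality forces the rigid regular structure claimed above (both the tightness of the book bound and the concentration of all degrees at the integer optimum), and then on the two arithmetic obstructions, the parity of the degree sum in (ii) and the divisibility of the triangle count in (iii), which are precisely what the hypotheses (none needed for (ii); $n\equiv 2\pmod 3$ for (iii)) are tuned to defeat.
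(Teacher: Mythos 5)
Your proof is correct and follows exactly the route the paper indicates for this cited Rousseau--Sheehan theorem: Goodman's monochromatic-triangle identity combined with the book bounds $3T_F \le (r-1)e$ and $3T_{\overline F} \le (s-1)\bar e$ for the upper bounds, and the self-complementary Paley graph on $q=4n+1$ vertices (strongly regular with $\lambda = n-1$) for the lower bound in (i). The arithmetic details you flag as the main obstacle do check out: at $N=4n-1$ the integer optimum $d_v = 2n-1$ is the unique maximizer and $(4n-1)(2n-1)$ is odd, while at $N=4n-3$ equality forces $3T_F = (n-3)(n-1)(4n-3) \equiv 1 \pmod 3$ when $n \equiv 2 \pmod 3$, exactly as you claim.
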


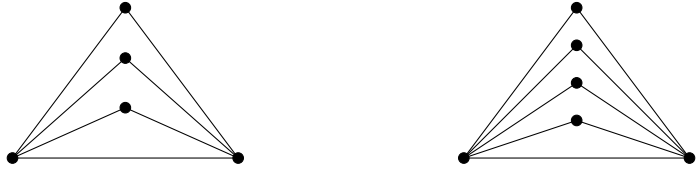
\begin{figure}
    \centering
    \begin{tikzpicture}
  \filldraw (0,0) circle (2pt);
  \filldraw (3,0) circle (2pt);

    \filldraw (1.5,2) circle (2pt);
    \filldraw (1.5,1.33) circle (2pt);
    \filldraw (1.5,.67) circle (2pt);
    
    \draw (0,0) -- (3,0);
    \draw (0,0) -- (1.5,2);
    \draw (0,0) -- (1.5,1.33);
    \draw (0,0) -- (1.5,.67);
    \draw (3,0) -- (1.5,2);
    \draw (3,0) -- (1.5,1.33);
    \draw (3,0) -- (1.5,.67);

\filldraw (6,0) circle (2pt);
  \filldraw (9,0) circle (2pt);

    \filldraw (7.5,2) circle (2pt);
    \filldraw (7.5,1.5) circle (2pt);
    \filldraw (7.5,1) circle (2pt);
    \filldraw (7.5,.5) circle (2pt);
    \draw (6,0) -- (9,0);
    \draw (6,0) -- (7.5,2);
    \draw (6,0) -- (7.5,1.5);
    \draw (6,0) -- (7.5,1);
    \draw (6,0) -- (7.5,0.5);
    \draw (9,0) -- (7.5,2);
    \draw (9,0) -- (7.5,1.5);
    \draw (9,0) -- (7.5,1);
    \draw (9,0) -- (7.5,0.5);
\end{tikzpicture}
    \caption{Book graphs $B_3$ and $B_4$.}
    \label{Figure_books}
\end{figure}

The upper bounds in Theorem \ref{TheoremAlmostDiagonalBookUpperBound} come from careful examination of Goodman's bound for the number of monochromatic triangles in a 2-edge-coloring of a complete graph \cite{GoodmanRamsey}. The lower bounds for $R(B_n,B_n)$ come from Paley graphs. Further bounds on small Ramsey numbers for books and other graphs were found in \cite{BlackLevenRadz,ShaoXuBoPan,FaudreeRousseauSheehanStronglyRegular,LidickyPfenderSDPRamsey} (see \cite{RamseySurvey} for a detailed accounting). 

In recent years, SAT solvers have been instrumental in solving difficult combinatorial problems. In particular, they have been used effectively to compute exact bounds in arithmetic Ramsey theory, namely in Schur, Rado, and van der Waerden numbers \cite{SchurFive, VDW26, VDW34, PythagoreanTriplesSAT, BMRS_3ColorSchur, WJW_Rado_ISSAC}. In 2016, Codish, Frank, Itzhakov, and Miller computed the second known three-color Ramsey number, $R(3,3,4) = 30$, with SAT solvers \cite{R334Equals30}. More recent works have used SAT solvers to certify the bound $R(3,8) \le 28$ \cite{R38_Verify} and compute the bound $R(5,5) \le 46$ \cite{R55Le46}.

The main contributions of our paper are improvements on book Ramsey number lower bounds using \emph{block-circulant} graphs, which have yielded recent improvements for Ramsey numbers involving the graphs $K_{m,n}, K_n - e,$ and the wheel graphs $W_n$  \cite{BlockCircRamseyGoedVanOver}. In particular, we show that a certain generalization of the Paley graph gives the tight bound $R(B_{n-1},B_n) = 4n-1$ for infinitely many $n$. We also produce block-circulant graphs that give new lower bounds for $R(B_r,B_s)$ for several small values of $r$ and $s$, some of which are tight as well. Moreover, we obtain additional lower and upper bounds for $R(B_r,B_s)$ using SAT and integer programming (IP) solvers. Using the related SAT modulo symmetries (SMS) framework \cite{SMS}, we are able to enumerate the graphs avoiding copies of $B_r$ and $\overline B_s$ for some small values of $r$ and $s$. 

 We state our main results precisely in Section \ref{SectionResults}. In Section \ref{SectionBlockCirculant} we give more details on block-circulant graphs and our Paley-type constructions. Section \ref{SectionSAT} outlines the computational methods to achieve the improved bounds. Data for graphs that achieve the lower bounds is given in the Appendix. 

\section{Results}\label{SectionResults}
Our main results are the following. First, we show that the upper bound $R(B_{n-1},B_n) \le 4n-1$ given in Theorem \ref{TheoremAlmostDiagonalBookUpperBound} is tight for an infinite family of $n$. 

\begin{theorem}\label{TheoremAlmostDiagPrimePowers}
    The book Ramsey number $R(B_{n-1},B_n)$ equals $4n-1$ for $n \le 20$. Moreover, if $2n-1$ is a prime power congruent to $1$ modulo $4$, then $R(B_{n-1},B_n) = 4n-1$. 
\end{theorem}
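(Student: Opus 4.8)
The upper bound $R(B_{n-1},B_n)\le 4n-1$ is already supplied by part (ii) of Theorem \ref{TheoremAlmostDiagonalBookUpperBound}, so the whole task is the matching lower bound $R(B_{n-1},B_n)\ge 4n-1$. Concretely, I must exhibit on $N=4n-2$ vertices a graph $G$ that is $B_{n-1}$-free and whose complement $\overline{G}$ is $B_n$-free. I would first translate both conditions into statements about common neighbors in $G$: since a graph contains $B_m$ exactly when some edge lies in at least $m$ triangles, $G$ is $B_{n-1}$-free iff every edge of $G$ has at most $n-2$ common neighbors. If I arrange for $G$ to be $(2n-2)$-regular, then for a non-edge $uv$ the number of common neighbors of $u,v$ in $\overline{G}$ is $(N-2)-|N_G(u)\cup N_G(v)|$, which for such a $G$ simplifies to $s:=|N_G(u)\cap N_G(v)|$; hence $\overline{G}$ is $B_n$-free iff every non-edge of $G$ has at most $n-1$ common $G$-neighbors. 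Thus the target is a $(2n-2)$-regular graph on $4n-2$ vertices in which adjacent pairs have at most $n-2$ common neighbors and non-adjacent pairs at most $n-1$ — an \emph{off-by-one} gap that rules out any self-complementary (equi-regular) example and forces a slight asymmetry between the two colors.

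For the infinite family I would build $G$ from quadratic residues, in the block-circulant spirit of \cite{BlockCircRamseyGoedVanOver}. Let $q=2n-1$ be a prime power with $q\equiv 1\pmod 4$, let $Q,\overline{Q}\subseteq\F_q$ be the nonzero quadratic residues and non-residues (so $|Q|=|\overline{Q}|=n-1$ and $Q=-Q$ since $-1\in Q$), and take vertex set $\F_q\times\{0,1\}$. The candidate $G$ places a Paley graph on block $0$ ($(a,0)\sim(b,0)$ iff $a-b\in Q$), its complementary Paley graph on block $1$ ($(a,1)\sim(b,1)$ iff $a-b\in\overline{Q}$), and joins the two blocks through the residues as well ($(a,0)\sim(b,1)$ iff $a-b\in Q$). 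This $G$ is $(2n-2)$-regular, and the opposite roles of $Q$ on the two blocks are exactly what should produce the needed non-self-complementary, off-by-one behavior.

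The heart of the proof is the verification of the two common-neighbor bounds, which I would carry out orbit by orbit. The simultaneous translations $\F_q$ and residue-preserving multiplications by $Q$ are automorphisms acting within each block, so every pair of vertices falls into one of a handful of types: within-block-$0$, within-block-$1$, and cross-block, each split according to whether the field difference $d$ is a residue, a non-residue, or zero. For each type the common-neighbor count splits as a block-$0$ contribution plus a block-$1$ contribution, and each contribution is an intersection number $|\{x: x\in A,\ x-d\in B\}|$ with $A,B\in\{Q,\overline{Q}\}$. These are the standard Paley intersection numbers $(q-5)/4=(n-3)/2$ and $(q-1)/4=(n-1)/2$ (equivalently, evaluations using $\sum_{x\in\F_q}\chi(x)\chi(x-d)=-1$ for the quadratic character $\chi$), so every count reduces to routine residue arithmetic. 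I expect the main obstacle to be not any single computation but getting the inter-block rule right so that all edge-types stay at $n-2$ while every non-edge-type stays at $n-1$: the binding cases are the within-block-$1$ edges, the cross edges, and the within-block-$0$ and cross non-edges, each of which should come out exactly at its bound with no slack. Confirming these tight equalities (and that no type exceeds its bound) yields $R(B_{n-1},B_n)\ge 4n-1$, and with Theorem \ref{TheoremAlmostDiagonalBookUpperBound} gives equality whenever $2n-1$ is a prime power congruent to $1$ modulo $4$.

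Finally, for the finitely many $n\le 20$ with $2n-1$ not a prime power congruent to $1$ modulo $4$, I would obtain the same lower bound by exhibiting explicit block-circulant graphs found by the SAT and integer-programming searches of Section \ref{SectionSAT}, with adjacency data recorded in the Appendix; verifying that each such graph is $B_{n-1}$-free with $B_n$-free complement is a direct finite check.
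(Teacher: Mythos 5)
Your proposal is correct and takes essentially the same route as the paper: the identical Paley-type two-block construction (residues $Q$ inside one block, nonresidues $N$ inside the other, $Q$ across the blocks), verified by the same quadratic-residue intersection numbers $(q-5)/4$ and $(q-1)/4$, combined with Theorem \ref{TheoremAlmostDiagonalBookUpperBound}(ii) for the upper bound and with SAT/IP searches for the remaining $n \le 20$, and you even identify the same binding cases the paper's computation exhibits. Your only deviation is cosmetic: you use $(2n-2)$-regularity to turn the complement's book condition into a common-neighbor condition on $G$ itself, whereas the paper checks the complement directly via the complementary difference sets $\overline{D}_{ij}$; both reduce to the same residue arithmetic.
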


We also computed the following new bounds for other values $R(B_r,B_s)$. We note that some of these bounds, namely $R(B_2,B_8) \ge 21, R(B_2,B_9) \ge 22,$ and $R(B_5,B_7) \ge 25$, have been computed independently using tabu search methods in the recent paper by Lidick\'y, McKinley, Pfender, and Van Overberghe \cite{LidickyMcKinleyPfenderSmallBooksWheels}. The bound $R(B_3,B_6) \ge 19$ also appeared in the author's Ph.D. thesis \cite{WJWThesis}. 

\begin{theorem}\label{ThemoremMiscBounds}
The values for $R(B_r,B_s)$ in the following tables hold. 


\begin{table}[h]
\caption{$R(B_r,B_s)$ for $r = 2,3$} \label{Table23}
\begin{center}
    \begin{tabular}{|c|c|c|c|c}
\hline
     $r$ & $s$ & $R(B_r,B_s)$ \\
    \hline 
   $2 $&$ 8 $&$ \bf{21} $\\
    $2 $&$ 9 $&$ \bf{22} $\\
    $2 $&$ 10 $&$ \bf{25} $\\
    $2 $&$ 12 $&$ \bf{28} $\\
    $2$ & $13$ & $\bf{29}$ \\
    $3 $&$ 6 $&$ \bf{19} $\\
    $3 $&$ 7 $&$ \bf{20} $\\
    \hline
\end{tabular}
\end{center}
\end{table}
\begin{table}[h] 
\caption{$R(B_r,B_s)$ for $r = s-2$}\label{TableAlmostDiag}
\begin{center}
\begin{tabular}{|c|c|c|c|c}
\hline
     $r$ & $s$ & $R(B_r,B_s)$ \\
    \hline 
    $5 $&$ 7 $& $\ge \bf{25}$ \\ 
    $6 $&$ 8 $&$ \bf{29}  $\\
    $7 $&$ 9 $&$ \ge \bf{33}$ \\
    $8 $&$ 10 $&$ \ge \bf{37} $\\
    $9 $&$ 11 $&$ \bf{41}  $\\
    $10 $&$ 12 $&$ \ge\bf{45}  $\\
    $11 $&$ 13 $&$ \ge \bf{49}  $\\
    $12 $&$ 14 $&$ \bf{53}  $\\
    $13 $&$ 15 $&$ \ge \bf{57}  $\\ 
    $14 $&$ 16 $&$ \ge \bf{61}   $\\
    $15 $&$ 17 $&$ \bf{65}  $\\ 
    \hline
\end{tabular}
\end{center}
\end{table}
\begin{table}[h]
\caption{$R(B_r,B_s)$ for $r = s$}\label{TableDiag}
\begin{center}
\begin{tabular}{|c|c|c|c|c}
\hline
     $r$ & $s$ & $R(B_r,B_s)$ \\
    \hline 
    $8 $&$ 8 $&$ \bf{33} $\\
    $11$ & $11$ & $\ge \bf{45}$ \\ 
    $14$ & $14$ & $\bf{57}$ \\
    $16$ & $16$ & $\ge \bf{65}$\\
    \hline 
\end{tabular}
\end{center}
\end{table}

\end{theorem}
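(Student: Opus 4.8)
The plan is to separate the claims into lower bounds, established by exhibiting explicit extremal graphs, and the matching upper bounds needed wherever an equality (rather than a $\ge$) is asserted. The reformulation I would use throughout is that a graph $G$ on $N$ vertices, read as a red/blue coloring of $K_N$ with red $=$ edges of $G$ and blue $=$ non-edges, contains a red copy of $B_k$ exactly when some edge $uv$ has at least $k$ common neighbors, and a blue copy of $B_k$ exactly when some non-edge $uv$ has at least $k$ common non-neighbors. Writing $d(u,v) = |N(u)\cap N(v)|$ for the codegree and $\bar d(u,v)$ for the number of other vertices adjacent to neither $u$ nor $v$, the strict inequality $R(B_r,B_s) > N$ holds if and only if there is a graph $G$ on $N$ vertices with
\[
d(u,v) \le r-1 \text{ for every edge } uv, \qquad \bar d(u,v) \le s-1 \text{ for every non-edge } uv .
\]
I would take this local codegree condition as the definition of a \emph{good} graph; a claimed lower bound $R(B_r,B_s) \ge m$ then amounts to producing a good graph with $N = m-1$.

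For the lower bounds, for each pair $(r,s)$ in the three tables I would exhibit such a good graph on $m-1$ vertices, namely the block-circulant graphs produced by the SAT and integer programming search of Section \ref{SectionSAT} and recorded in the Appendix. Because each witness is (block-)circulant, its automorphism group acts with few orbits on vertex pairs, so the two codegree conditions need only be verified on one representative edge and one representative non-edge per orbit rather than on all $\binom{m-1}{2}$ pairs, reducing the check to a short finite computation.

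For the upper bounds I would split according to whether Theorem \ref{TheoremAlmostDiagonalBookUpperBound} applies. For the rows $(6,8),(9,11),(12,14),(15,17)$ of Table \ref{TableAlmostDiag}, where $s=r+2$ and $s \equiv 2 \pmod 3$, part (iii) gives $R(B_{s-2},B_s) \le 4s-3$ directly, matching the construction and yielding equality. For every other asserted equality, namely all of Table \ref{Table23} together with the diagonal entries $R(B_8,B_8)=33$ and $R(B_{14},B_{14})=57$, Theorem \ref{TheoremAlmostDiagonalBookUpperBound} does not deliver the stated value: it says nothing about the small off-diagonal pairs of Table \ref{Table23}, and for the two diagonal cases $4n+1$ is not a prime power, so part (i) yields only $R(B_n,B_n) \le 4n+2$, one larger than claimed. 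For these I would certify the upper bound by encoding the nonexistence of a good graph on $m$ vertices as a CNF formula and obtaining an UNSAT answer from a SAT solver, using the SAT-modulo-symmetries framework to quotient out the vertex relabelings (and, for the diagonal pairs, the complementation symmetry) that would otherwise make the search intractable.

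The main obstacle is precisely these SAT upper bounds. Exhibiting and checking a single extremal graph is routine once the search has found it, but proving that \emph{no} good graph exists on $m$ vertices is an exhaustive statement over a space that grows like $2^{\binom{m}{2}}$, so the difficulty lies in producing a codegree encoding compact enough for a solver together with symmetry breaking strong enough to terminate, and—since the conclusion is a nonexistence result—in relying on, or independently verifying, the UNSAT certificate. The lower-bound side is comparatively safe, as each witness can be checked by hand against the codegree inequalities above.
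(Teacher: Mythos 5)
Your overall decomposition matches the paper's proof for most of the theorem: the lower bounds are certified by explicit witness graphs (the 2-block-circulant graphs of the Appendix for Tables \ref{TableAlmostDiag} and \ref{TableDiag}, checked via the orbit conditions of Corollary \ref{Corollary2BlockLB}), the upper bounds in Table \ref{Table23} are DRAT-certified SAT unsatisfiability results, and the four equalities in Table \ref{TableAlmostDiag} follow from Theorem \ref{TheoremAlmostDiagonalBookUpperBound}(iii) exactly as you say. Two small inaccuracies on the lower-bound side: the witnesses for Table \ref{Table23} are \emph{not} block-circulant (the paper lists raw adjacency matrices for them), and the bounds $R(B_2,B_{12})\ge 28$ and $R(B_2,B_{13})\ge 29$ come not from search but from known constructions, the Schl\"afli graph and $K_{14,14}$ respectively \cite{FaudreeRousseauSheehanStronglyRegular}.

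The genuine gap is your treatment of the two diagonal equalities $R(B_8,B_8)=33$ and $R(B_{14},B_{14})=57$. You propose to prove the upper bounds $\le 33$ and $\le 57$ by running a SAT solver to unsatisfiability on 33 and 57 vertices. This is not workable, and it is not what the paper does. For calibration, the hardest UNSAT instance in the paper is on 29 vertices with thresholds $(2,13)$; it took roughly $2.2\times 10^5$ seconds and produced a 186GB DRAT certificate. A diagonal instance on 57 vertices, with $\binom{57}{2}=1596$ edge variables and a cardinality constraint through every pair, is astronomically beyond current UNSAT technology, symmetry breaking or not. The upper bounds here instead come from a known structural refinement of the Goodman-bound argument (Rousseau--Sheehan \cite{RousseauSheehanRamseyBooksOriginal}, Faudree--Rousseau--Sheehan \cite{FaudreeRousseauSheehanStronglyRegular}): a $2$-coloring of $K_{4n+1}$ with no monochromatic $B_n$ forces equality in Goodman's triangle count, and equality forces each color class to be strongly regular with parameters $(4n+1,2n,n-1,n)$, i.e.\ a conference graph; such a graph can exist only if $4n+1$ is a sum of two squares. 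Since $33=3\cdot 11$ and $57=3\cdot 19$ are not sums of two squares, $R(B_8,B_8)\le 33$ and $R(B_{14},B_{14})\le 57$, and the 32- and 56-vertex block-circulant witnesses finish the proof. This is also precisely why the paper asserts only lower bounds for $(11,11)$ and $(16,16)$: there $4n+1$ equals $45=6^2+3^2$ and $65=8^2+1^2$, the obstruction vanishes, and no matching upper bound is available. Without importing this conference-graph argument (or some other non-computational upper bound), your plan does not establish the two diagonal equalities in Table \ref{TableDiag}.
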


A \emph{Ramsey $(G,H,n)$ graph} is a graph on $n$ vertices that does not contain a copy of $G$ or $\overline{H}$. We say that such a graph is a \emph{critical graph} if $n = R(G,H)-1$. It is of general interest to enumerate the critical graphs (up to isomorphism) for given $G$ and $H$, and this was done for $(G,H) \in \{(B_2,B_3),(B_3,B_4),(B_2,B_6),(B_2,B_7)\}$ in \cite{ShaoXuBoPan,BlackLevenRadz}. The following theorem gives the numbers of critical graphs for $G = B_r$, $H = B_s$ and small values of $r$ and $s$.  

\begin{theorem} \label{TheoremEnumeration}
The following hold. 
\begin{center}
        \begin{tabular}{|c|c|c|c|}
            \hline
            $r$ & $s$ & $R(B_r,B_s)$ &  \#critical graphs\\
            \hline
            $1$ & $1$ & $6$ & $1$\\  
            $1$ & $2$ & $7$ & $4$ \\ 
            $1$ & $3$ & $9$ & $8$ \\
            $1$ & $4$ & $11$ & $7$ \\
            
            $1$ & $5$ & $13$ & $8$ \\ 
            $1$ & $6$ & $15$ & $8$ \\
$1$ & $7$ & $17$ & $10$ \\ 
$1$ & $8$ & $19$ & $10$  \\
$2$ & $2$ & $10$ & $1$\\
$2$ & $3$ & $11$ & $4$ \cite{ShaoXuBoPan}\\
 $2$ & $4$ & $13$ & $6$  \\ 
  $2$ & $5$ & $16$ & $1$ \\
  $2$ & $6$ & $17$ & $3$ \cite{BlackLevenRadz}  \\
  $2$ & $7$ & $18$ & $65$ \cite{BlackLevenRadz}  \\
    $        2 $&$ 8 $&$ 21 $&$ 1 $ \\
     $       2 $&$ 9 $&$ 22 $&$ 72 $ \\
      $      2 $&$ 10 $&$ 25 $&$ 5 $ \\
      $2$ & $11$ & $28$ & $1$  \\
      $2$ & $12$ & $28$ & $10$\\
\hline 
\end{tabular}
\begin{tabular}{|c|
c|c|c|} 
     \hline 
         $r$ & $s$ & $R(B_r,B_s)$ &  \#critical graphs\\
            \hline
      $3$ & $3$ & $14$ & $1$  \\ 
       $3 $&$ 4 $&$ 15 $&$ 1 $ \cite{ShaoXuBoPan} \\
      $3$ & $5$ & $17$ & $10$ \\
       $     3 $&$ 6 $&$ 19 $&$ 4 $\\
       $4$ & $4$ & $17$ & $1$  \\
        $          4 $&$ 5 $&$ 19 $&$ 27 $ \\
       $5$ & $5$ & $21$ & $247$  \\
        $         5 $&$ 6 $&$ 23 $&$ 23 $ \\
       $6$ & $6$ & $26$ & $15$ \\
 \hline 
 \multicolumn{1}{c}{}\\
 \multicolumn{1}{c}{}\\
 \multicolumn{1}{c}{}\\
 \multicolumn{1}{c}{}\\
 \multicolumn{1}{c}{}\\
 \multicolumn{1}{c}{}\\
 \multicolumn{1}{c}{}\\
 \multicolumn{1}{c}{}\\
\multicolumn{1}{c}{}\\
\multicolumn{1}{c}{}\\
        \end{tabular}
        \end{center}
\end{theorem}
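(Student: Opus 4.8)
The plan is to treat this as an exhaustive isomorph-free enumeration problem and solve it with the SMS framework. The first step is to rephrase the defining conditions of a Ramsey $(B_r,B_s,n)$ graph $F$ purely in terms of codegrees, since a graph contains $B_k = K_2 + \overline{K_k}$ precisely when some edge lies in at least $k$ triangles. Concretely, $F$ contains no copy of $B_r$ if and only if every edge $uv \in E(F)$ satisfies $|N_F(u) \cap N_F(v)| \le r-1$, and $\overline F$ contains no copy of $B_s$ if and only if every non-edge $uv$ of $F$ has at most $s-1$ common non-neighbors. Thus a critical graph is exactly a graph on $n = R(B_r,B_s)-1$ vertices meeting both codegree bounds, and the task reduces to counting such graphs up to isomorphism.

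Next I would encode these conditions as a propositional formula. Introduce a Boolean variable $e_{ij}$ for each pair $i<j$ recording whether $ij \in E(F)$. For each ordered pair $(i,j)$ and each third vertex $w$, the common-neighbor and common-non-neighbor indicators are simple conjunctions of the $e$-variables, so the two codegree bounds become cardinality constraints $\sum_w [\cdots] \le r-1$ (guarded by $e_{ij}$) and $\sum_w [\cdots] \le s-1$ (guarded by $\lnot e_{ij}$). These cardinality constraints admit standard SAT encodings (e.g.\ sequential or totalizer encodings), yielding a CNF whose satisfying assignments are precisely the Ramsey $(B_r,B_s,n)$ graphs.

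The heart of the argument is then to enumerate the models up to isomorphism. Running the solver on the raw CNF would report every labeling of every Ramsey graph and grossly overcount; instead I would invoke SMS, which augments the search with a dynamically generated minimality (lex-leader) constraint so that only the canonical representative of each isomorphism class survives. Exhaustively enumerating all SMS solutions at $n = R(B_r,B_s)-1$ then produces exactly one graph per isomorphism class of critical graphs, and their number is the desired count. The existence of at least one such graph simultaneously certifies the lower bound $R(B_r,B_s) \ge n+1$; rerunning the encoding at $n = R(B_r,B_s)$ and verifying unsatisfiability certifies the matching upper bound, where it is not already supplied by Theorem \ref{TheoremAlmostDiagonalBookUpperBound}.

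I expect the main obstacle to be computational feasibility and trust rather than conceptual difficulty: the search space grows rapidly with $n$ (the largest instances here reach $n$ in the mid-twenties), so the efficiency of the cardinality encoding and, above all, the strength of the symmetry breaking are decisive in keeping the enumeration tractable and genuinely isomorph-free. To guard against encoding errors and solver bugs I would validate the entire pipeline against the independently established counts for $(B_2,B_3)$, $(B_3,B_4)$, $(B_2,B_6)$, and $(B_2,B_7)$ from \cite{ShaoXuBoPan,BlackLevenRadz} before trusting the new rows of the table.
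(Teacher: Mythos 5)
Your proposal matches the paper's method essentially exactly: the paper enumerates critical graphs with SAT Modulo Symmetries, using the same codegree reformulation (its Encoding~\ref{EncodingSATBooks}, where the common-neighbor indicators become triangle variables $y_{ijk}, y'_{ijk}$ via a Tseitin transformation and the bounds become cardinality constraints), with lex-leader symmetry breaking ensuring isomorph-free counts and unsatisfiability at $n = R(B_r,B_s)$ certifying the upper bounds not already covered by Theorem~\ref{TheoremAlmostDiagonalBookUpperBound}. The only differences are implementation details (choice of cardinality encoding, your explicit edge guards versus the paper's guards being absorbed into the conjunction defining $y_{ijk}$), which do not affect correctness.
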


\section{Block-circulant Graphs}\label{SectionBlockCirculant}

Recall that a \emph{circulant matrix} is a matrix such that each row is a one-element right cyclic shift of the previous row. A \emph{circulant graph} is a graph whose adjacency matrix is circulant. More generally, a graph is \emph{block-circulant} on $k$ blocks (or simply \emph{$k$-block-circulant}) if its adjacency matrix can be written in the form $$A = \begin{pmatrix}C_{11} & C_{12} & \dots & C_{1k} \\ C_{21}  & C_{22} & \dots & C_{2k} \\ \vdots & \vdots & \ddots & \vdots 
& \\ C_{k1} & C_{k2} & \dots & C_{kk}\end{pmatrix}, $$ where $C_{ij}$ is a circulant matrix for all $i,j$. Observe that since $A$ is symmetric, we have $C_{ij} = C_{ji}^T$ for all $i$ and $j$. Note also that $C_{ij}$ must be symmetric for $i = j$, but $C_{ij}$ need not be symmetric for $i \neq j$. A block-circulant graph, therefore, is determined completely by the first rows of $C_{ij}$ for $i\le j$. For all $i,j$, let $D_{ij}$ denote the indices (using zero-indexing) of the columns of $C_{ij}$ whose first entry is equal to 1. Since $A$ is an adjacency matrix, we must have $0 \not \in D_{ii}$ for all $i$, but it is possible that $0 \in D_{ij}$ for $i \neq j$. We identify the tuple $(D_{ij})_{i\le j}$ with the corresponding block-circulant graph. 

Circulant graphs often provide good (and in several cases, tight) lower bounds for the classical Ramsey numbers $R(s,t)$. The more general block-circulant graphs have also yielded improved lower bounds for other Ramsey numbers, in particular for $R(K_{s} -e, K_{t}-e)$ \cite{BlockCircRamseyGoedVanOver}. Here we show that block-circulant graphs give tight bounds for Ramsey numbers for book graphs. 

We will use the following notation throughout this paper. 

\begin{definition}
    For subsets $X,Y$ of an additive group $G$ and $d \in G$, let
    \begin{align*}
    \Delta(X,Y,d) &:= |\{(x,y) \in X \times Y : x-y = d\} |,\\ \Sigma(X,Y,d) &:= |\{(x,y) \in X \times Y : x+y = d\} |.
\end{align*}
\end{definition}

\begin{definition}
    Given a graph $G = (V,E)$ and two vertices $u,v \in V$, let $$\Gamma(u,v) := \{ w \in V : \{u,w\} \in E \text{ and } \{v,w\} \in E \}.$$
\end{definition}

A useful property of 2-block-circulant graphs is that $|\Gamma(u,v)|$ can be computed easily via the following lemma. 

\begin{lemma}\label{LemmaNumberCommonNeighbors2Block}
    Let $G = (V,E)$ be a $2$-block-circulant graph $(D_{11},D_{12},D_{22})$ with $V = V_1 \sqcup V_2$ and $V_1 = V_2 = \Z_m$. That is, the vertex set is the disjoint union of two copies of $\Z_m$ and $|V| = 2m$. Then for all $u,v \in V$, we have 
    \begin{align*}|\Gamma(u,v)| =  \begin{cases} 
      \Delta(D_{11},D_{11},v-u) +  \Delta(D_{12},D_{12},v-u)  \text{ if }  {u,v \in V_1} \\ 
    \Delta(D_{22},D_{22},v-u) +  \Delta(D_{12},D_{12},v-u) \text{ if } {u,v \in V_2} \\
    \Sigma(D_{11},D_{12},v-u) +  \Delta(D_{12},D_{22},v-u) \text{ if } {u \in V_1, v \in V_2}
    \end{cases} 
    \end{align*}
\end{lemma}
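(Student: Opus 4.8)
The plan is to introduce explicit coordinates for the vertices and translate every adjacency relation into a statement about differences in $\Z_m$. Write each vertex of $V_1$ as $(1,a)$ and each vertex of $V_2$ as $(2,a)$ for $a \in \Z_m$. Because every block $C_{ij}$ is circulant with its first row supported on $D_{ij}$, the entry in position $(a,b)$ of $C_{ij}$ equals $1$ exactly when $b - a \in D_{ij}$. Hence $(1,a) \sim (1,b)$ iff $b-a \in D_{11}$, $(2,a)\sim(2,b)$ iff $b-a\in D_{22}$, and $(1,a)\sim(2,b)$ iff $b-a \in D_{12}$. Throughout I would write $d = v-u$ for the relevant difference of the $\Z_m$-coordinates of $u$ and $v$.

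With this dictionary in hand, I would prove each of the three cases by partitioning the common neighbors $w$ of $u$ and $v$ according to whether $w \in V_1$ or $w \in V_2$, so that $|\Gamma(u,v)|$ is a sum of two counts. In each sub-count I would record the two adjacency conditions on $w$ as membership of two differences $x,y$ in the appropriate $D_{ij}$'s, observe that $x$ and $y$ are determined by $w$ and that the constraint relating them is linear (either $x-y=d$ or $x+y=d$), and then note that the map $w \mapsto (x,y)$ is a bijection onto the set of pairs satisfying that constraint. For instance, when $u,v \in V_1$ and $w=(1,w_0)\in V_1$, setting $x = w_0 - u_0 \in D_{11}$ and $y = w_0 - v_0 \in D_{11}$ gives $x - y = d$, so these neighbors are counted by $\Delta(D_{11},D_{11},d)$; the $V_2$-neighbors contribute $\Delta(D_{12},D_{12},d)$, matching the stated formula. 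The mixed case $u \in V_1, v \in V_2$ is the only one producing a sum constraint, which is where the $\Sigma$ term arises.

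The one point requiring genuine care is the asymmetry of the off-diagonal block: since $C_{12}$ need not be symmetric, I must consistently use $b-a \in D_{12}$ (and not $a-b$) when passing from the $V_1$-coordinate to the $V_2$-coordinate, and track the resulting signs. This is most visible in the case $u,v\in V_2$, where counting the $V_1$-neighbors leads naturally to the constraint $y - x = d$ rather than $x-y=d$, i.e.\ to $\Delta(D_{12},D_{12},-d)$. I would then close the gap with the elementary identity $\Delta(X,X,-d)=\Delta(X,X,d)$, valid because swapping the two coordinates of a pair turns a solution of $x-y=-d$ into a solution of $x-y=d$; this is what reconciles the computation with the symmetric-looking $\Delta(D_{12},D_{12},v-u)$ term. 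No individual step is difficult, so the main obstacle is purely bookkeeping: fixing an orientation convention for the differences at the outset and applying it uniformly across all six sub-counts.
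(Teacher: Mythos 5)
Your proposal is correct and follows essentially the same route as the paper: both translate adjacency into difference conditions via the circulant structure, partition the common neighbors of $u,v$ by block, and count each part as a $\Delta$ or $\Sigma$ quantity. The only cosmetic difference is that the paper carries out the count with indicator-function sums and a change of variables, while you use an explicit bijection $w \mapsto (x,y)$ and the symmetry $\Delta(X,X,-d)=\Delta(X,X,d)$ to handle the sign in the $u,v\in V_2$ case, which the paper silently absorbs by labeling the pair in the other order.
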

\begin{proof} 
If $u \in V_i$ and $v \in V_j$ with $i\le j$, then $u$ and $v$ are adjacent if and only if $v-u \in D_{ij}$. 
Suppose $u,v$ belong to $V_1$. Then they have a common neighbor $w$ if and only if $w-u, w-v \in D_{11}$ or $w-u, w-v \in D_{12}$. Then, letting $\chi_S$ denote the indicator function of a set $S$, we have  
\begin{align*}
    |\Gamma(u,v)| &= \sum_{w \in V_1} \chi_{D_{11}}(w-u)\chi_{D_{11}}(w-v) + \sum_{w \in V_2} \chi_{D_{12}}(w-u)\chi_{D_{12}}(w-v) \\
    &=\sum_{w+v \in V_1} \chi_{D_{11}}(w+v-u)\chi_{D_{11}}(w) + \sum_{w+v \in V_2} \chi_{D_{12}}(w+v-u)\chi_{D_{12}}(w) \\
    &= \sum_{w \in V_1} \chi_{D_{11}}(w+v-u)\chi_{D_{11}}(w) + \sum_{w \in V_2} \chi_{D_{12}}(w+v-u)\chi_{D_{12}}(w) \\
    &= \Delta(D_{11},D_{11},v-u) +\Delta(D_{12},D_{12},v-u).
\end{align*}

The case where $u,v \in D_{22}$ is similar. If $u \in V_1, v \in V_2$, then $u$ and $v$ have a common neighbor $w$ in $V_1$ if and only if $w - u \in D_{11}$ and $v-w \in D_{12}$, and a common neighbor $w \in V_2$ if and only if $w-u \in D_{12}$ and $w-v \in D_{22}$. The total number of neighbors of $u$ and $v$ is then $\Sigma(D_{11},D_{12},v-u) +  \Delta(D_{12},D_{22},v-u)$. 

Then we have 

\begin{align*}
    |\Gamma(u,v)| &= \sum_{w \in V_1} \chi_{D_{11}}(w-u)\chi_{D_{12}}(v-w) + \sum_{w \in V_2} \chi_{D_{12}}(w-u)\chi_{D_{22}}(w-v) \\
    &=\sum_{w+v \in V_1} \chi_{D_{11}}(w+v-u)\chi_{D_{12}}(-w) + \sum_{w+v \in V_2} \chi_{D_{12}}(w+v-u)\chi_{D_{22}}(w) \\
    &= \sum_{w \in V_1} \chi_{D_{11}}(w+v-u)\chi_{D_{12}}(-w) + \sum_{w \in V_2} \chi_{D_{12}}(w+v-u)\chi_{D_{22}}(w) \\
    &= \Sigma(D_{11},D_{12},v-u) +\Delta(D_{12},D_{22},v-u).
\end{align*}

\end{proof} 
The following corollary gives a lower bound for $R(B_r,B_s)$. It follows quickly from the preceding lemma and the definition of book graphs. 
\begin{corollary}\label{Corollary2BlockLB}
    Let $G = (D_{11},D_{12},D_{22})$ be a $2$-block-circulant graph on $2m$ vertices. Let $\overline{D}_{ii}, \overline{D}_{12}$ denote, respectively, the complement of $D_{ii}$ in $\Z_m \setminus \{0\}$ and the complement of $D_{12}$ in $\Z_m$. If 
    \begin{align*}
         \Delta(D_{11},D_{11},d) +  \Delta(D_{12},D_{12},d)  < r, \text{ for all $d \in D_{11}$}  \\ 
    \Delta(D_{22},D_{22},d) +  \Delta(D_{12},D_{12},d) < r, \text{ for all $d \in D_{22}$}  \\
    \Sigma(D_{11},D_{12},d) +  \Delta(D_{12},D_{22},d) < r, \text{ for all $d \in D_{12}$} \\
      \Delta(\overline D_{11},\overline D_{11},d) +  \Delta(\overline D_{12},\overline D_{12},d)  < s, \text{ for all $d \in \overline D_{11}$}  \\ 
    \Delta(\overline D_{22},\overline D_{22},d) +  \Delta(\overline D_{12},\overline D_{12},d) < s, \text{ for all $d \in \overline D_{22}$}  \\
    \Sigma(\overline D_{11},\overline D_{12},d) +  \Delta(\overline D_{12},\overline D_{22},d) < s, \text{ for all $d \in \overline D_{12}$}
    \end{align*}

    then $R(B_r, B_s) > 2m$.
\end{corollary}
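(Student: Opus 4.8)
The plan is to prove the contrapositive-free direction directly: I would exhibit the graph $G = (D_{11},D_{12},D_{22})$ itself as a Ramsey $(B_r,B_s,2m)$ graph, which immediately yields $R(B_r,B_s) > 2m$. The bridge to Lemma \ref{LemmaNumberCommonNeighbors2Block} is the elementary observation that, since $B_r = K_2 + \overline{K_r}$ is an edge together with $r$ further vertices each adjacent to both of its endpoints, a graph contains a copy of $B_r$ if and only if some edge $\{u,v\}$ satisfies $|\Gamma(u,v)| \ge r$. Hence $G$ avoids $B_r$ exactly when $|\Gamma(u,v)| < r$ for every edge $\{u,v\}$ of $G$, and $G$ avoids $\overline{B_s}$ exactly when the complement $\overline{G}$ avoids $B_s$, i.e.\ when every edge of $\overline{G}$ has fewer than $s$ common neighbors in $\overline{G}$.

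First I would reduce the ``no $B_r$'' condition to the three displayed inequalities involving $D_{11}, D_{22}, D_{12}$. By the translation-invariance of the block-circulant structure, whether $\{u,v\}$ is an edge and the value of $|\Gamma(u,v)|$ depend only on which of $V_1,V_2$ the endpoints lie in and on the difference $d = v-u$. The edges with both endpoints in $V_1$ are precisely those with $d \in D_{11}$, those inside $V_2$ are those with $d \in D_{22}$, and those between the blocks (taking $u \in V_1$, $v \in V_2$) are those with $d \in D_{12}$. Substituting these three edge types into Lemma \ref{LemmaNumberCommonNeighbors2Block} shows that demanding $|\Gamma(u,v)| < r$ over all edges is exactly the first three inequalities; letting $d$ range over the full difference set already covers the $d \leftrightarrow -d$ ambiguity within a block, since $D_{11}$ and $D_{22}$ are symmetric.

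Next I would argue that $\overline{G}$ is again $2$-block-circulant on the same two copies of $\Z_m$, now with difference data $(\overline{D}_{11}, \overline{D}_{12}, \overline{D}_{22})$, and then rerun the previous paragraph verbatim for $\overline{G}$ and the parameter $s$, obtaining the last three inequalities as the statement ``$\overline{G}$ contains no $B_s$.'' This identification is the one place that needs genuine care: for the diagonal blocks the complement must be taken inside $\Z_m \setminus \{0\}$, since loops are forbidden and the difference $0$ never indexes an edge, whereas for the off-diagonal block the complement is taken inside all of $\Z_m$, since $0 \in D_{12}$ is permitted. Keeping these two different ambient sets straight, and checking that complementing a block-circulant graph simply complements the underlying difference sets in this way, is the main (though routine) obstacle.

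Combining the two halves, $G$ contains neither $B_r$ nor $\overline{B_s}$, so it is a Ramsey $(B_r,B_s,2m)$ graph and $R(B_r,B_s) > 2m$. Apart from the bookkeeping for the complement described above, every step is a direct substitution into Lemma \ref{LemmaNumberCommonNeighbors2Block}, so I expect the proof to be short.
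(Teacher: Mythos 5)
Your proposal is correct and follows essentially the same route as the paper: both reduce ``$G$ avoids $B_r$'' to the bound $|\Gamma(u,v)| < r$ over edges, split edges into the three block types, invoke Lemma \ref{LemmaNumberCommonNeighbors2Block}, and handle $\overline{B_s}$ by applying the same argument to the complement, which is again $2$-block-circulant with data $(\overline{D}_{11},\overline{D}_{12},\overline{D}_{22})$. The only difference is that you spell out the complementation bookkeeping (diagonal blocks complemented in $\Z_m \setminus \{0\}$, the off-diagonal block in $\Z_m$) that the paper compresses into a single ``similarly.''
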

\begin{proof}
    A graph $G$ does not contain a copy of $B_r$ if and only if $|\Gamma(u,v)| < r$ for adjacent $u,v$. The first three hypotheses correspond to the cases $u,v \in V_1$, $u,v \in V_2$, and $u \in V_1, v \in V_2$, respectively. By Lemma \ref{LemmaNumberCommonNeighbors2Block}, this implies that $|\Gamma(u,v)| < r$ for adjacent $u,v$. Similarly, the last three hypotheses and Lemma \ref{LemmaNumberCommonNeighbors2Block} imply that $G$ contains no copy of $\overline{B_s}$, and the result follows. 
\end{proof}

More generally, we can make the following construction, which is a ``2-block" Cayley graph. Let $G$ be an abelian group, and let $D_{11},D_{12},D_{22} \subseteq G$. Define a graph $\Gamma_G(D_{11},D_{12},D_{22})$ with vertex set $V_1 \sqcup V_2$ with $V_1 = V_2 = G$ such that $x,y$ are adjacent if and only if $x,y \in V_1$ and $y-x \in D_{11}$, $x,y \in V_2$ and $y-x \in D_{22}$, or $x \in V_1, y \in V_2$ and $y-x \in D_{12}$. A 2-block-circulant graph on $2m$ vertices is the special case $G = \Z_m$.

\begin{lemma}\label{LemmaBookDifferenceSetConditions}
    Let $G$ be an abelian group. Let $\overline{D}_{ii}, \overline{D}_{12}$ denote, respectively, the complement of $D_{ii}$ in $G \setminus \{0\}$ for $i = 1,2$ and the complement of $D_{12}$ in $G$. If 
    \begin{align*}
         \Delta(D_{11},D_{11},d) +  \Delta(D_{12},D_{12},d)  < r, \text{ for all $d \in D_{11}$}  \\ 
    \Delta(D_{22},D_{22},d) +  \Delta(D_{12},D_{12},d) < r, \text{ for all $d \in D_{22}$}  \\
    \Sigma(D_{11},D_{12},d) +  \Delta(D_{12},D_{22},d) < r, \text{ for all $d \in D_{12}$} \\
      \Delta(\overline D_{11},\overline D_{11},d) +  \Delta(\overline D_{12},\overline D_{12},d)  < s, \text{ for all $d \in \overline D_{11}$}  \\ 
    \Delta(\overline D_{22},\overline D_{22},d) +  \Delta(\overline D_{12},\overline D_{12},d) < s, \text{ for all $d \in \overline D_{22}$}  \\
    \Sigma(\overline D_{11},\overline D_{12},d) +  \Delta(\overline D_{12},\overline D_{22},d) < s, \text{ for all $d \in \overline D_{12}$}
    \end{align*}

    then $\Gamma_G(D_{11},D_{12},D_{22})$ does not contain a copy of $B_r$ or $\overline{B}_s$, hence $R(B_r, B_s) > 2|G|$.
\end{lemma}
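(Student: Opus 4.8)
The plan is to reduce this statement to the two-block-circulant case already handled by Lemma \ref{LemmaNumberCommonNeighbors2Block} and Corollary \ref{Corollary2BlockLB}, exploiting the fact that the proof of Lemma \ref{LemmaNumberCommonNeighbors2Block} never used any property of $\Z_m$ beyond its abelian group structure. First I would verify that the common-neighbor count $|\Gamma(u,v)|$ in $\Gamma_G(D_{11},D_{12},D_{22})$ is given by exactly the same three-case formula as in Lemma \ref{LemmaNumberCommonNeighbors2Block}, with $\Z_m$ replaced throughout by $G$. The only manipulation in that proof is the reindexing $w \mapsto w+v$, which is a bijection of $G$ onto itself for any abelian group, so each indicator-function sum transforms identically and collapses to the appropriate $\Delta$ or $\Sigma$ term. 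This yields a verbatim generalization of Lemma \ref{LemmaNumberCommonNeighbors2Block} to arbitrary abelian $G$.

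Next I would argue that $\Gamma_G(D_{11},D_{12},D_{22})$ contains no copy of $B_r$. Since a copy of $B_r$ is precisely an edge $\{u,v\}$ together with $r$ common neighbors, the graph avoids $B_r$ if and only if $|\Gamma(u,v)| < r$ for every edge $\{u,v\}$. There are three types of edges, corresponding to $u,v \in V_1$ with $v-u \in D_{11}$, to $u,v \in V_2$ with $v-u \in D_{22}$, and to $u \in V_1,\, v \in V_2$ with $v-u \in D_{12}$. The first three hypotheses bound $|\Gamma(u,v)|$ by $r-1$ in each of these cases respectively, via the generalized common-neighbor formula, so no $B_r$ occurs.

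For the $\overline{B_s}$ condition, the key observation is that the complement of $\Gamma_G(D_{11},D_{12},D_{22})$ is again a two-block Cayley graph, namely $\Gamma_G(\overline D_{11},\overline D_{12},\overline D_{22})$. Avoiding $\overline{B_s}$ in $\Gamma_G(D_{11},D_{12},D_{22})$ is the same as avoiding $B_s$ in its complement, so applying the previous paragraph to $\Gamma_G(\overline D_{11},\overline D_{12},\overline D_{22})$ shows that the last three hypotheses guarantee no copy of $\overline{B_s}$. I expect the main point requiring care to be exactly the bookkeeping around the identity $0$: the diagonal complements $\overline D_{ii}$ must be taken in $G \setminus \{0\}$, since there are no loops and $0$ is excluded, whereas the off-diagonal complement $\overline D_{12}$ is taken in all of $G$, because an edge between $V_1$ and $V_2$ with $v-u = 0$ is permitted. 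One must confirm that these complements are taken in the correct sets so that the complement graph is genuinely $\Gamma_G(\overline D_{11},\overline D_{12},\overline D_{22})$; this is precisely why the statement distinguishes the two kinds of complement. With both conditions established, $\Gamma_G(D_{11},D_{12},D_{22})$ is a Ramsey $(B_r,B_s,2|G|)$ graph, whence $R(B_r,B_s) > 2|G|$.
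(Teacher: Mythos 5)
Your proposal is correct and matches the paper's intent exactly: the paper omits this proof, stating it is ``essentially the same as Lemma \ref{LemmaNumberCommonNeighbors2Block} and Corollary \ref{Corollary2BlockLB},'' and your argument is precisely that --- the common-neighbor formula generalizes verbatim since the reindexing $w \mapsto w+v$ is a bijection on any abelian group, and the complement graph is again a two-block Cayley graph $\Gamma_G(\overline D_{11},\overline D_{12},\overline D_{22})$ with the complements taken in $G \setminus \{0\}$ on the diagonal and in $G$ off the diagonal. Your explicit attention to the role of $0$ in the two kinds of complement is exactly the bookkeeping the paper leaves implicit.
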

The proof of Lemma \ref{LemmaBookDifferenceSetConditions} is essentially the same as Lemma \ref{LemmaNumberCommonNeighbors2Block} and Corollary \ref{Corollary2BlockLB}; we omit the details here.

Recall that in the Paley graph of order $q$, vertices $x$ and $y$ are adjacent if and only if $x-y$ is a nonzero quadratic residue in $\F_q$. In particular, for prime $q$, Paley graphs are circulant. In Theorem \ref{TheoremAlmostDiagonalBookUpperBound}, the lower bound $R(B_n,B_n) \ge 4n+2$ when $q = 4n+1$ is a prime power comes from the fact that the Paley graph of order $q$ does not contain copies of $B_n$ and is self-complementary. More precisely, two adjacent vertices share exactly $n-1$ common neighbors. Our construction in the proof of Theorem \ref{TheoremAlmostDiagPrimePowers} is a generalization of the Paley graphs using two blocks instead of one.  The following lemma gives convenient properties of quadratic residues and nonresidues in $\F_q$. 
\begin{lemma} \label{LemmaPrimePowerResidueDifferences}
    Let $q \equiv 1 \pmod 4$ be a prime power. Let $Q$ and $N$ denote the set of nonzero quadratic residues and nonresidues, respectively, in $\F_q$. Then 

    \begin{align*}
        \Delta(Q,Q,d) = \begin{cases}
        \frac{q-1}4 -1 & d \in Q \\
        \frac{q-1}4 & d \in N
        \end{cases}\\
        \Delta(N,N,d) = \begin{cases}
        \frac{q-1}4  & d \in Q \\
        \frac{q-1}4 -1 & d \in N
        \end{cases}\\
        \Delta(Q,N,d) = \Delta(N,Q,d) =  \begin{cases}
        \frac{q-1}4  & d \neq 0\\
        0 & d = 0
        \end{cases}
    \end{align*}
\end{lemma}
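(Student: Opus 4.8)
The plan is to prove this via character sum estimates, or more elementarily, via the standard counting identities for quadratic residues in $\F_q$. The quantity $\Delta(Q,Q,d)$ counts pairs $(x,y) \in Q \times Q$ with $x - y = d$. First I would handle the degenerate structure: fixing $d \neq 0$, each solution corresponds to a value $y \in Q$ such that $y + d \in Q$ as well, so $\Delta(Q,Q,d) = |\{y \in \F_q : y \in Q \text{ and } y+d \in Q\}|$. The same reformulation applies to $N$ and to the mixed sum $\Delta(Q,N,d)$. The cleanest route uses the multiplicative structure: since $q \equiv 1 \pmod 4$, we have $-1 \in Q$, which makes $Q = -Q$ and ensures the difference sets behave symmetrically; this is what forces the ``$Q$ vs. $N$'' dichotomy in the cases rather than a more complicated dependence on $d$.

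Next I would reduce to a single canonical value of $d$ in each residue class. The key observation is that the quadratic character partitions $\F_q^\times$ into $Q$ and $N$, and for any $t \in \F_q^\times$ the map $x \mapsto tx$ sends $Q$ to $Q$ if $t \in Q$ and $Q$ to $N$ if $t \in N$. Scaling the equation $x - y = d$ by $t^{-1}$ shows that $\Delta(Q,Q,d)$ depends only on whether $d \in Q$ or $d \in N$ (and likewise for the other two counts), because multiplying a solution through by a fixed square preserves membership in $Q$. This immediately collapses the problem to computing $\Delta(Q,Q,d)$ for one representative $d \in Q$ and one $d \in N$, rather than for all $q-1$ nonzero values.

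For the actual evaluation I would introduce the quadratic character $\chi$ and write indicator functions of $Q$ as $\tfrac{1}{2}(1 + \chi(x))$ (taking care at $x = 0$). Then, for instance,
\begin{align*}
\Delta(Q,Q,d) &= \sum_{\substack{y \in \F_q \\ y \neq 0,\, y \neq -d}} \tfrac{1}{2}\bigl(1 + \chi(y)\bigr)\tfrac{1}{2}\bigl(1 + \chi(y+d)\bigr).
\end{align*}
Expanding gives a main term of order $\tfrac{q}{4}$ plus character sums $\sum_y \chi(y)$, $\sum_y \chi(y+d)$, and the crucial cross term $\sum_y \chi(y)\chi(y+d) = \sum_y \chi\bigl(y(y+d)\bigr)$. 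This last sum is the classical Jacobsthal-type sum, which evaluates to $-1$ for $d \neq 0$ by the standard fact that $\sum_{y} \chi(y^2 + dy) = -1$ when $d \neq 0$. Assembling the terms, and carefully accounting for the excluded points $y = 0$ and $y = -d$ (which is where the $\pm 1$ correction distinguishing $d \in Q$ from $d \in N$ enters), yields the stated values. The mixed count $\Delta(Q,N,d)$ follows from the same expansion with the sign on one character flipped, and the $d = 0$ case is immediate since $x - y = 0$ with $x \in Q$, $y \in N$ is impossible, while $x = y \in Q$ gives $\Delta(Q,Q,0) = |Q| = \tfrac{q-1}{2}$ (not needed for the table as stated).

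The main obstacle I expect is the bookkeeping around the excluded endpoints $y = 0$ and $y = -d$: these are precisely the terms responsible for the $\tfrac{q-1}{4}$ versus $\tfrac{q-1}{4} - 1$ distinction, and whether a correction of $-1$ appears depends on whether $-d \in Q$ or $-d \in N$, which in turn depends on $d$ through the factor $\chi(-1) = 1$ (valid because $q \equiv 1 \pmod 4$). Keeping the character-sum evaluation of $\sum_y \chi(y(y+d)) = -1$ separate from the endpoint corrections, and verifying consistency via the identity $\Delta(Q,Q,d) + \Delta(Q,N,d) = |Q| = \tfrac{q-1}{2}$ (and the analogous row/column sum checks), is the delicate part; everything else is routine expansion of indicator functions.
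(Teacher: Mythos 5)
Your proposal is correct and follows essentially the same route as the paper: the paper's proof also expands quadratic-residue indicator functions (written via Euler's criterion, $\chi(x) = x^{(q-1)/2}$, with an explicit $\delta_0$ correction) and reduces everything to the evaluation $\sum_{y}\chi\bigl(y(y+d)\bigr) = -1$ for $d \neq 0$, which it proves by the substitution $y \mapsto 1 + y^{-1}d$ rather than citing it as a classical Jacobsthal-type sum. Your additional scaling reduction (collapsing to one representative $d$ per residue class) is valid but unnecessary once the full character computation is carried out.
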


\begin{proof}
    Let $S,S' \in \{Q,N\}$, and let $\chi_{A}$ denote the indicator function of a set $A$. Observe that the analogue of Euler's criterion holds in $\F_q$, that is, for all $x \in \F_q$, we have $$x^{(q-1)/2} = \begin{cases}
        1 & \text{if } x \in Q \\
        -1 & \text{if } x \in N \\
        0 & \text{if } x = 0.
    \end{cases} $$
    
    Recall also that exactly half of the elements of $\F_q \setminus \{0\}$ are squares, so $\sum_{x \in \F_q} x^{(q-1)/2} = 0$.
    
   Let $\delta_0(x) = \chi_{\{0\}}(x)$. Let $\epsilon = \begin{cases} 1 & \text{ if } S = Q \\ -1 & \text{ if } S = N \end{cases}$, and similarly for $\epsilon'$ and $S'$.
   
   Then 

 \begin{align*}
    \Delta(S,S',d) 
    &= \sum_{x \in \F_q}\chi_{S}(x+d) \chi_{S'}(x)  \\
    &= \sum_{x \in \F_q} \left( \frac{1+\epsilon(x+d)^{(q-1)/2} -\delta_0(x+d)}{2}\right)\left( \frac{1+\epsilon'x^{(q-1)/2} -\delta_0(x)}{2}\right)\\
    &= \frac 14\sum_{x \in \F_q} (1 -\delta_0(x) - \delta_0(x+d)+\epsilon'x^{(q-1)/2}  +\epsilon(x+d)^{(q-1)/2} -\epsilon\delta_0(x)(x+d)^{(q-1)/2} \\&\quad -\epsilon'\delta_0(x+d) x^{(q-1)/2} + \epsilon \epsilon'(x+d)^{(q-1)/2}x^{(q-1)/2} + \delta_0(x)\delta_0(x+d) ) \\
    &= \frac 14\Bigl(q-2 -\epsilon d^{(q-1)/2} -\epsilon' (-d)^{(q-1)/2} + \epsilon \epsilon'\sum_{x \in \F_q}(x^2+xd)^{(q-1)/2} \Bigr) \\
    &= \frac 14\Bigl(q-2 -(\epsilon + \epsilon') d^{(q-1)/2} + \epsilon \epsilon'\sum_{x \in \F_q \setminus \{0\} } (x^2+xd)^{(q-1)/2} \Bigr)\\
    &= \frac 14\Bigl(q-2 -(\epsilon + \epsilon') d^{(q-1)/2} + \epsilon \epsilon'\sum_{x \in \F_q \setminus \{0\} } (1+x\inv d)^{(q-1)/2}\Bigr) \\
    &= \frac 14\Bigl(q-2 -(\epsilon + \epsilon') d^{(q-1)/2} + \epsilon \epsilon'\sum_{y \in \F_q \setminus \{1\} } y^{(q-1)/2}\Bigr) \\
    &= \frac 14\Bigl(q-2 -(\epsilon + \epsilon') d^{(q-1)/2} - \epsilon \epsilon'\Bigr).
    \end{align*}
    
\end{proof}

We can now prove Theorem \ref{TheoremAlmostDiagPrimePowers}. 
\begin{proof}
    Let $q = 2n-1$ be a prime power equivalent to 1 modulo 4. Let $Q$ and $N$ denote respectively the set of quadratic residues and nonresidues in $\F_q$. Set $D_{11} = D_{12} = Q$, and $D_{22}  = N$. We will show that the conditions in Lemma \ref{LemmaBookDifferenceSetConditions} hold for the graph $\Gamma_{\F_q}(Q,Q,N)$.

 Since $q \equiv 1 \pmod 4$, we have $-1 \in Q$. It follows that $d \in Q$ if and only if $-d \in Q$, and also $d \in N$ if and only if $-d \in N$. Therefore $\overline{D}_{22} = D_{11} = Q$, $\overline{D}_{11} = D_{22} = N$, and $\overline{D}_{12} = N \cup \{0\}$. 

By Lemma \ref{LemmaPrimePowerResidueDifferences}, we have the following. 

For all $d \in D_{11} = Q$, $$\Delta(D_{11},D_{11},d) + \Delta(D_{12},D_{12},d) = 2 \Delta(Q,Q,d) = \frac{q-1}2 - 2 = n-3.$$
 
For all $d \in D_{22} = N$,
$$\Delta(D_{22},D_{22},d) + \Delta(D_{12},D_{12},d) = \frac{q-1}4 - 1+ \frac{q-1}4 =  n-2.$$
 
For all $d \in D_{12} = Q$, 
\begin{align*}
    \Sigma(D_{11},D_{12},d) +  \Delta(D_{12},D_{22},d) &= \Delta(D_{11},-D_{12},d) + \Delta(D_{12},D_{22},d) \\&= \Delta(Q,Q,d) + \Delta(Q,N,d) \\&= \frac{q-1}4 - 1 + \frac{q-1}4 \\&= n-2. 
\end{align*}

For all $d \in \overline D_{11} = N$,   \begin{align*}\Delta(\overline D_{11},\overline D_{11},d) +  \Delta(\overline D_{12},\overline D_{12},d) &= 
  \Delta(N,N,d) +  \Delta(N \cup \{0\},N \cup \{0\},d) \\& = \Bigl (\frac{q-1}4 -1 \Bigr) + \Bigl( \frac{q-1}4 +1 \Bigr) \\& = n-1,
\end{align*}

For all $d \in \overline{D}_{22} = Q$,

\begin{align*}
    \Delta(\overline D_{22},\overline D_{22},d) +  \Delta(\overline D_{12},\overline D_{12},d) &= \Delta(Q,Q,d) +  \Delta(N \cup \{0\} ,N \cup \{0\} ,d) \\& = \frac{q-1}4 -1 + \frac{q-1}4 = n-2
\end{align*}

For all $d \in \overline{D}_{12} = N \cup \{0\} $,

\begin{align*}
    \Sigma(\overline D_{11},\overline D_{12},d) +  \Delta(\overline D_{12},\overline D_{22},d)   &=  \Delta(\overline D_{11},-\overline D_{12},d) +  \Delta(\overline D_{12},\overline D_{22},d) \\
    &= \Delta(N,N \cup \{0\},d) +  \Delta(N\cup \{0\} ,Q    ,d) 
    \\
    &= \begin{cases}
        \frac{q-1}2 + 0 \text{ if } d = 0 \\
        \frac{q-1}{4} + \frac{q-1}{4} \text { if } d \neq 0 \\
    \end{cases}\\
     & \le n-1.
\end{align*}

    By Lemma \ref{LemmaBookDifferenceSetConditions}, $\Gamma_{\F_q}(Q,Q,N)$ does not contain a copy of $B_{n-1}$ or $\overline {B_n}$. Therefore $R(B_{n-1},B_n) \ge 2q+1 = 4n-1$, and by Theorem \ref{TheoremAlmostDiagonalBookUpperBound}, this is tight.  
\end{proof}
The above proof gives the bound $R(B_{n-1},B_n) \ge 4n-1$ for an infinite family of $n$. For the small values $n \ge 20$ and the bounds for other Ramsey numbers in Theorem \ref{ThemoremMiscBounds}, we outline the computational methods used to obtain them in the next section.

\section{SAT and IP methods}\label{SectionSAT}
There are natural encodings for computing Ramsey numbers using SAT solvers and IP solvers. For arbitrary graphs $G_1$ and $G_2$, the following encoding gives a way to compute $R(G_1,G_2)$.

\begin{encoding}\label{EncodingRamseySAT}
	The Ramsey number $R(G_1,G_2)$ is at most $n$ if the formula $F_n(G_1,G_2)$ is unsatisfiable, where and 
	$$F_n(G_1,G_2) :=\Bigl(\bigwedge_{H\subset K_n, H\cong G_1} \Bigl(\bigvee_{e\in E(H)} \bar x_e\Bigr) \Bigr)\wedge \Bigl(\bigwedge_{H\subset K_n, H\cong G_2} \Bigl(\bigvee_{e\in E(H)} x_e\Bigr)\Bigr).$$
	Moreover, if $F_n(G_1,G_2)$ is satisfiable, then $R(G_1,G_2) \ge n+1$.
\end{encoding}

However, for $(G_1,G_2) = (B_r,B_s)$, this formula becomes too large even for modest values of $r$ and $s$ because $K_n$ contains $\binom n 2 (\binom {n-2}{r}+\binom{n-2}{s})$ copies of $B_r$ and $B_s$. To obtain a smaller encoding, we introduce new variables $y_{ijk}$ and $y'_{ijk}$ for each triangle, letting
\begin{equation}\label{Eq_y}
    y_{ijk} := x_{ij} \wedge x_{ik} \wedge x_{jk},\quad  y'_{ijk} := \bar{x}_{ij} \wedge \bar{x}_{ik} \wedge \bar{x}_{jk}. 
\end{equation}

(Note that here we ignore the order of the subscript indices, so that $y_{123}$ and $y_{321}$ are the same.) 

Now let $L_r(i,j)$ (respectively $L'_r(i,j)$) denote the constraint that at most $r$ of the variables $y_{ijk}$ (respectively $y'_{ijk}$) are set to true. Then we have the following encoding to bound $R(B_r,B_s)$.

\begin{encoding} \label{EncodingSATBooks}
    With notation as above, the Ramsey number $R(B_r,B_s)$ is at most $n$ if and only if the formula 

    $$F_n(r,s) := \bigwedge_{1 \le j \le n } L_r(i,j) \cap \bigwedge_{1 \le j \le n } L_s'(i,j)$$ is unsatisfiable.
\end{encoding}
This encoding can be converted to conjunctive normal form (CNF) by using a Tseitin transformation for the formulas in \eqref{Eq_y} and using a cardinality constraint encoding for the $L_r(i,j)$ and $L'_s(i,j)$. For this latter encoding we used the tree encoding from \cite{WynnSATcardinality}, Section 2.4 (see also \cite{BailleuxBoufkhadTreeEncoding}). 

Finally, we added symmetry breaking clauses to enforce that no simple vertex transposition (that is, swapping vertices $i$ and $i+1$ for some $i$) results in a lexicographically smaller adjacency matrix. This is the same set of clauses produced by the SAT symmetry breaking software {\scshape Shatter} \cite{Shatter}. We could have introduced more symmetry breaking clauses, for instance other vertex transpositions, but in practice this usually does not provide any useful speedup. 

Another popular method for computing Ramsey number bounds is integer programming (IP). For some of our lower bounds, this method was more efficient than SAT solving. In particular, IP solving worked well for finding 2-block-circulant graphs (though it is not difficult to enforce block-circulant constraints in the SAT encoding). For our book graphs, we can encode the problem as an integer program by converting the constraints in Corollary \ref{Corollary2BlockLB} into linear inequalities. For brevity, we will not give the encoding in full detail here, but as an example, the constraint $$\Delta(D_{11},D_{11},d) +  \Delta(D_{12},D_{12},d)  < r, \text{ for all } d \in D_{11}$$ in Corollary \ref{Corollary2BlockLB} can be encoded as follows: 

\begin{align*}
    s_{i,d} &\le x_i,  && u_{i,d} \le z_i,\\
    s_{i,d} &\le x_{i+d}, &&  u_{i,d} \le z_{i+d}, \\
    s_{i,d} &\le x_d, &&  u_{i,d} \le z_d,\\
    s_{i,d} &\ge x_i + x_{i+d} + x_d -2, &&u_{i,d} \ge z_i + z_{i+d} + z_d -2, \quad \quad \forall i,d \in \Z_m \\
    \sum_{i \in \Z_m } & s_{i,d} + u_{i,d} \le r-1  && \forall d \in \Z_m, \\
    x_0 &= 0, \\ 
    x_i, &z_i, s_{i,d}, u_{i,d} \in \{0,1\} && \forall i,d \in \Z_m. 
\end{align*}

Here the variables $x_i$ are set to 1 if $i \in D_{11}$, and similarly $z_i$ is set to 1 if $i \in D_{12}$. Then $s_{i,d} = 1$ if and only if $d \in D_{11}$ and $x_i, x_{i+d} \in D_{11}$, and similarly for $z_{i,d}$. 



\subsection{Proofs of Theorem \ref{ThemoremMiscBounds} and Theorem \ref{TheoremEnumeration}}

The bounds in Theorem \ref{ThemoremMiscBounds} and the bounds for $n \le 20$ in Theorem \ref{TheoremAlmostDiagPrimePowers} were computed using the SAT solver {\scshape{Kissat}} \cite{Kissat} and the IP solver {\scshape{SCIP}} \cite{SCIP6}. Generally speaking, the lower bounds in Table \ref{Table23} were produced using SAT solving and those in Tables \ref{TableAlmostDiag} and \ref{TableDiag} were produced by integer programming. However, there are two exceptions: the lower bounds $R(B_2,B_{12}) \ge 28$ and $R(B_2, B_{13})$ follow from results in \cite{FaudreeRousseauSheehanStronglyRegular}. Namely, the Schl\"afli graph is a 27-vertex graph that contains no copy of $B_{11}$ and its complement contains no $B_2$, so $28 \le R(B_2, B_{11}) \le R(B_2,B_{12})$. Moreover, the complete bipartite graph $K_{14,14}$ contains no $B_2$ and its complement contains no $B_{13}$, so $R(B_2,B_{13}) \ge 29$. For each lower bound, we produced a graph avoiding copies of $B_r$ and $\overline{B}_s$. These graphs can be found in the Appendix, and we display two such graphs in Figure \ref{Figure_2blockCircAlmostDiag}. 
\begin{figure} 
\begin{center}
     \includegraphics[scale=0.3]{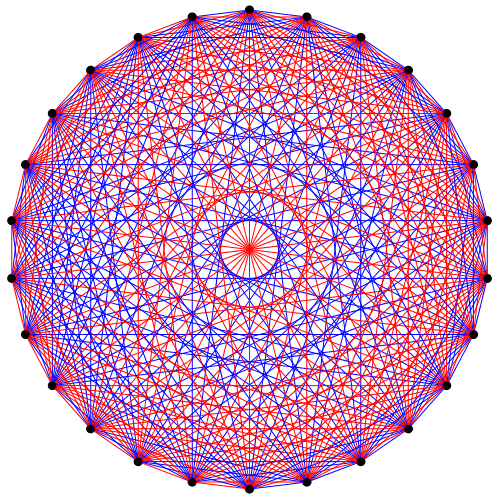}
    \includegraphics[scale=0.3]{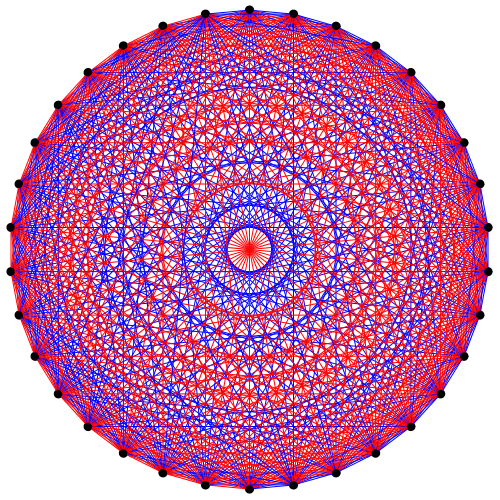}
\end{center}
\caption{2-block-circulant critical graphs for $R(B_6,B_7)$ (left) and $R(B_8,B_9)$ (right). Present edges are colored red, missing edges are colored blue.}\label{Figure_2blockCircAlmostDiag}
\end{figure}
In each of the 2-block-circulant graphs we initially found, we observed that $D_{11}$ and $D_{22}$ were complements (with the exception of 0). We added this constraint as an ansatz for higher values of $r$ and $s$, and this alone never resulted in an infeasible program. In our Paley-type constructions we also had $D_{11} = D_{12}$, but enforcing this constraint did give infeasible programs for some values of $r$ and $s$. For higher values of $r$ and $s$ we did add constraints of the form $S \subset D_{11}$ for certain sets $S$, for example $S = \{1,2,3,4\}$ and $S = \{1,4,9,16\}$, and this sped up computation in some cases. Our computational evidence suggests that $R(B_{n-1},B_n) = 4n-1$ for all $n$, and we conjecture this is true, though unfortunately, we were unable to find a general construction.

 For the new upper bounds $R(B_2,B_8) \le 21$, $R(B_2,B_9) \le 22$, and $R(B_3,B_7) \le 20$, we used Encoding \ref{EncodingSATBooks} and showed the corresponding formula was unsatisfiable. For each bound we produced DRAT certificates, sequences of clauses (and deletions) that are redundant with respect to the formula. These certificates are then used to verify the computation by the SAT solver is correct. The most difficult were by far $R(B_3,B_7) \le 20$ and $R(B_2,B_{13}) \le 29$, which yielded DRAT certificates of sizes approximately 19GB and 186GB, respectively. 
 

\begin{center}
\begin{tabular}{|c|c|c|c|}
\hline 
    $r$ & $s$ & $R(B_r,B_s)$ & Upper bound solve time (seconds) \\
    \hline
     2& 8 & 21 & 2 \\
     2 & 9 & 22 & 81 \\
     2 & 10 & 25 & 12\\
     2 & 12 & 28 & 465 \\
     2 & 13 & 29 & 217841 \\
     3 & 7 & 20 & 31918 \\
     \hline
\end{tabular}
\end{center}

For the enumerations in Theorem \ref{TheoremEnumeration}, we used the SAT Modulo Symmetries software package \cite{SMS}. By far the most difficult calculation was finding the 23 graphs of order 22 that contain no $B_5$ and no $\overline{B}_6$, which took 985401 seconds. We summarize the results in the following table.

  \begin{center}
        \begin{tabular}{|c|c|c|c|c|}
            \hline
            $r$ & $s$ & $R(B_r,B_s)$ &  \#critical Ramsey graphs & Computation time (seconds)\\
            \hline
            $1$ & $1$ & $6$ & $1$ & $1$\\ 
            $1$ & $2$ & $7$ & $4$ & $1$\\ 
            $1$ & $3$ & $9$ & $8$ & $1$\\ 
            $1$ & $4$ & $11$ & $7$ & $1$\\ 
            
            $1$ & $5$ & $13$ & $8$ & $1$\\ 
            $1$ & $6$ & $15$ & $8$ & $23$\\ 
$1$ & $7$ & $17$ & $10$ & $1367$\\ 
$1$ & $8$ & $19$ & $10$ & $125564$ \\
$2$ & $2$ & $10$ & $1$& $1$\\
$2$ & $3$ & $11$ & $4$ \cite{ShaoXuBoPan}& $1$\\
 $2$ & $4$ & $13$ & $6$ & $1$ \\ 
  $2$ & $5$ & $16$ & $1$ & $1$ \\
  $2$ & $6$ & $17$ & $3$ \cite{BlackLevenRadz} & $1$ \\
  $2$ & $7$ & $18$ & $65$ \cite{BlackLevenRadz} & $4$ \\
    $        2 $&$ 8 $&$ 21 $&$ 1 $ & $5$ \\
     $       2 $&$ 9 $&$ 22 $&$ 72 $ & $154$\\
      $      2 $&$ 10 $&$ 25 $&$ 5 $ & $22$\\
      $2$ & $11$ & $28$ & $1$ & $2$ \\
      $2$ & $12$ & $28$ & $10$ & $985$ \\
      $3$ & $3$ & $14$ & $1$ & $1$ \\ 
       $3 $&$ 4 $&$ 15 $&$ 1 $ \cite{ShaoXuBoPan} & $1$\\
      $3$ & $5$ & $17$ & $10$ & $2$ \\
       $     3 $&$ 6 $&$ 19 $&$ 4 $ & $60$\\
       $4$ & $4$ & $17$ & $1$ & $1$ \\
        $          4 $&$ 5 $&$ 19 $&$ 27 $ & $931$\\
       $5$ & $5$ & $21$ & $247$ & $4250$ \\
        $         5 $&$ 6 $&$ 23 $&$ 23 $ & $985401$\\
       $6$ & $6$ & $26$ & $15$ & $28264$ \\
            \hline 
        \end{tabular}
    \end{center}

\section{Acknowledgments}
The author would like to thank Sam Buss for many helpful discussions, comments, and computing assistance.

\bibliography{RamseyBooks}
\bibliographystyle{abbrv}

\appendix
\section{Graph Data} 
Here we give the graphs that give lower bounds for book Ramsey numbers. 

The following table gives block-circulant graph with 2 blocks, and recall from Section \ref{SectionBlockCirculant} that such a graph can be specified as $(D_{11},D_{12},D_{22})$ where $D_{ij}$ is the set of column indices (zero-indexed) with nonzero entries in the first row of the block $C_{ij}$. In all of our cases we are showing bounds of the form $R(B_r,B_s) \ge 2m+1$, and we have $D_{22} = \overline{D}_{11} \setminus \{0\}$, that is the complement of $D_{11}$ in $\Z_{m} \setminus \{0\}$. Therefore to write these graphs more compactly, we specify only $D_{11}$ and $D_{12}$. 

\scriptsize
\begin{center}
\setlength\extrarowheight{0pt}
\renewcommand{\arraystretch}{1}
\begin{tabular}{|l|l|l|}

\hline
    bound & $D_{11}$ & $D_{12}$ \\
    \hline
$R(B_8,B_8) \ge 33$ & $\{2,4,7,8,9,12,14 \}$ & $\{0,1,2,4,5,11,13,14 \}$ \\
\hline 
$R(B_{11},B_{11}) \ge 45$ & \makecell[l]{$\{5,6,7,8,9,11,13,14,15,$\\$16,17\}$} & \makecell[l]{$\{1,2,4,6,7,9,10,14,16,19,20\}$}\\
\hline 
$R(B_{14},B_{14}) \ge 57$ & \makecell[l]{$ \{1,3,4,5,8,13,14,15,20,$\\$23,24,25,27 \} $} & \makecell[l]{$\{ 2,4,6,9,10,16,17,18,19,$\\$21,22,23,24,27\}$} \\
\hline 
$R(B_{16},B_{16}) \ge 65$ & \makecell[l]{$ \{2,3,8,9,11,12,14,16,18,20,$\\$21,23,24,29,30\} $} & \makecell[l]{$\{ 6,9,11,13,15,16,19,22,23,24,$\\$26,27,28,29,30,31\}$} \\
\hline 
    \hline
    $R(B_7,B_8) \ge 31$ & $\{1,3,6,9,12,14\}$ & $\{0,3,4,7,8,9,10\}$ \\
    \hline $R(B_9,B_{10}) \ge 39$ & $\{4,5,6,7,8,11,12,13,14,15\}$ & $\{0,2,3,4,6,7,9,12,17\} $ \\
    \hline$R(B_{10},B_{11}) \ge 43$ & $\{2,3,4,5,7,14,16,17,18,19\}$ & $\{0,1,5,8,11,12,14,16,17,18\}$ \\
    \hline 
    $R(B_{11},B_{12}) \ge 47$ &$\{4,6,8,9,10,13,14,15,17,19 \}$ &$\{4,5,6,9,12,13,16,18,19,21,22\}$ \\
    \hline 
    $R(B_{13},B_{14}) \ge 55$ &$\{2, 4, 6, 9, 11, 12, 15, 16, 18, 21, 23, 25\}$ &$\{1, 3, 4, 5, 7, 8, 16, 17, 19, 20, 24, 25, 26 \} $ \\
    \hline 
    $R(B_{15},B_{16}) \ge 63$ & \makecell[l]{$\{1, 2, 3, 6, 8, 14, 15, 16, 17, 23,$\\ $25, 28, 29, 30\}$} & \makecell[l]{$\{ 1, 2, 3, 4, 5, 6, 8, 9, 10, 14,$\\ $16, 19, 22, 25, 29\}$} \\
    \hline 
    $R(B_{16},B_{17}) \ge 67$ & \makecell[l]{$\{ 1, 2, 8, 10, 12, 13, 14,$\\ $16, 17, 19, 20, 21, 23, 25, 31, 32\}$} & \makecell[l]{$\{0, 5, 6, 7, 10, 12, 13, 16, $\\$19, 20, 21, 23, 24, 26, 28, 29\}$} \\
    \hline 
    $R(B_{17},B_{18}) \ge 71$ & \makecell[l]{$\{1, 2, 3, 5, 7, 8, 12, 13,$\\ $16, 19, 22, 23, 27, 28, 30, 32, 33, 34\} $} & \makecell[l]{$\{0, 2, 6, 8, 11, 12, 13, $\\$15, 17, 18, 24, 25, 26, 27, 29, 33, 34\}$}\\
    \hline
     $R(B_{19},B_{20}) \ge 79$ & \makecell[l]{$\{1,4,7,8,9,12,13,14,16,23,$\\$25,26,27,30,31,32,35,38\} $} & \makecell[l]{$\{0,1,2,3,10,11,14,17,21,23,$\\$25,27,28,29,30,31,34,35,37\}$}\\
    \hline
    \hline
    $R(B_5,B_7) \ge 25$ & $\{ 2,4,5,7,8,10\} $ & $\{0,3,4,6,11\} $ \\
    \hline 
    $R(B_6,B_8) \ge 29$ & $\{1,4,6,7,8,10,13 \}$ & $\{ 0,4,5,6,8,9\}$ \\
    \hline 
    $R(B_7,B_9) \ge 33$ & $\{1,2,4,5,11,12,14,15\}$ & $\{3,5,6,8,10,14,15 \}$ \\
    \hline
    $R(B_8,B_{10}) \ge 37$ & $\{1,3,7,8,10,11,15,17 \}$ & $\{0,3,8,9,13,14,15,16\}$ \\
    \hline
    $R(B_9,B_{11}) \ge 41$ & $\{3,4,7,8,10,12,13,16,17 \} $ & $\{0,1,2,3,4,6,11,13,18 \}$ \\
    \hline
    $R(B_{10},B_{12})\ge 45$ & $\{ 1,5,7,8,10,11,12,14,15,17,21\}$ & $\{1,2,3,5,6,7,9,12,14,15 \}$ \\ 
    \hline 

    $R(B_{11},B_{13}) \ge 49$ & $\{1, 2, 4, 8, 9, 11, 13, 15, 16, 20, 22, 23 \}$ & $\{0, 5, 6, 10, 11, 12, 13, 14, 16, 19, 20 \}$ \\
    \hline 

    $R(B_{12},B_{14}) \ge 53$ & \makecell[l]{$\{3,7,8,9,11,12,13,14,15,17,$\\$18,19,23 \}$} & \makecell[l]{$\{0,2,4,7,9,13,16,17,18,19,$\\$20,25 \}$} \\
    \hline 

    $R(B_{13},B_{15}) \ge 57$ & \makecell[l]{$\{1,2,3,4,9,10,13,15,18,19,$\\$24,25,26,27 \}$} & \makecell[l]{$\{0,1,3,5,9,10,13,15,17,18,$\\$20,21,24 \}$} \\
    \hline 

    $R(B_{14},B_{16}) \ge 61$ & \makecell[l]{$\{1,2,3,4,9,12,14,16,18,21,$\\$26,27,28,29 \}$} & \makecell[l]{$\{0,1,2,5,6,8,9,12,19,20,$\\$22,26,27,28 \}$ }\\
    \hline 

    $R(B_{15},B_{17}) \ge 65$ & \makecell[l]{$\{1,4,5,6,7,9,10,16,22,$\\$23,25,26,27,28,31\}$} & \makecell[l]{$\{0,1,2,8,10,14,15,18,21,$\\$22,23,25,26,28,30\}$} \\
    \hline 

$R(B_{16},B_{18}) \ge 69$ & \makecell[l]{$\{1,3,4,5,8,9,15,16,18,19,$\\$25,26,29,30,31,33\}$} & \makecell[l]{$\{0,1,2,4,6,14,17,19,20,22,$\\$25,26,27,28,29,31\}$} \\
    \hline 
    
\end{tabular}
\end{center}

For our other bounds, the matrices we found are not block-circulant, so here we give their adjacency matrices. 

\begin{itemize} 
\item 
$R(B_2,B_{8}) \ge 21$:

  [0, 0, 0, 0, 1, 0, 1, 0, 1, 1, 1, 1, 0, 0, 1, 1, 0, 0, 0, 0]

  [0, 0, 0, 1, 0, 1, 1, 0, 0, 0, 1, 1, 1, 0, 1, 0, 0, 1, 0, 0]

  [0, 0, 0, 1, 0, 0, 1, 1, 1, 0, 0, 0, 1, 0, 1, 1, 0, 0, 0, 0]

  [0, 1, 1, 0, 1, 0, 0, 0, 1, 1, 0, 1, 0, 1, 0, 0, 0, 0, 1, 0]

  [1, 0, 0, 1, 0, 0, 1, 1, 0, 0, 0, 0, 1, 0, 0, 0, 0, 1, 1, 1]

  [0, 1, 0, 0, 0, 0, 1, 1, 1, 1, 0, 0, 0, 0, 0, 1, 0, 0, 1, 1]

  [1, 1, 1, 0, 1, 1, 0, 0, 0, 0, 0, 0, 0, 1, 0, 0, 1, 0, 0, 0]

  [0, 0, 1, 0, 1, 1, 0, 0, 0, 1, 1, 1, 0, 0, 0, 0, 0, 1, 0, 0]

  [1, 0, 1, 1, 0, 1, 0, 0, 0, 0, 1, 0, 0, 0, 0, 0, 1, 1, 0, 1]

  [1, 0, 0, 1, 0, 1, 0, 1, 0, 0, 0, 0, 1, 1, 1, 0, 1, 0, 0, 0]

  [1, 1, 0, 0, 0, 0, 0, 1, 1, 0, 0, 0, 1, 1, 0, 0, 0, 0, 1, 0]

  [1, 1, 0, 1, 0, 0, 0, 1, 0, 0, 0, 0, 0, 0, 0, 1, 1, 0, 0, 1]

  [0, 1, 1, 0, 1, 0, 0, 0, 0, 1, 1, 0, 0, 0, 0, 1, 1, 0, 0, 1]

  [0, 0, 0, 1, 0, 0, 1, 0, 0, 1, 1, 0, 0, 0, 0, 1, 0, 1, 0, 1]

  [1, 1, 1, 0, 0, 0, 0, 0, 0, 1, 0, 0, 0, 0, 0, 0, 0, 1, 1, 1]

  [1, 0, 1, 0, 0, 1, 0, 0, 0, 0, 0, 1, 1, 1, 0, 0, 0, 1, 1, 0]

  [0, 0, 0, 0, 0, 0, 1, 0, 1, 1, 0, 1, 1, 0, 0, 0, 0, 1, 1, 0]

  [0, 1, 0, 0, 1, 0, 0, 1, 1, 0, 0, 0, 0, 1, 1, 1, 1, 0, 0, 0]

  [0, 0, 0, 1, 1, 1, 0, 0, 0, 0, 1, 0, 0, 0, 1, 1, 1, 0, 0, 0]

  [0, 0, 0, 0, 1, 1, 0, 0, 1, 0, 0, 1, 1, 1, 1, 0, 0, 0, 0, 0]

\item 
$R(B_2,B_9) \ge 22$:

[0, 0, 0, 0, 1, 1, 0, 0, 0, 0, 1, 0, 1, 0, 0, 1, 1, 0, 0, 1, 1]

[0, 0, 0, 0, 0, 1, 1, 0, 0, 0, 1, 1, 1, 1, 0, 1, 0, 1, 0, 0, 0]

[0, 0, 0, 1, 0, 1, 0, 0, 1, 1, 1, 0, 1, 0, 0, 0, 0, 0, 0, 0, 0]

[0, 0, 1, 0, 0, 0, 1, 1, 0, 0, 0, 0, 1, 0, 0, 1, 0, 1, 0, 1, 1]

[1, 0, 0, 0, 0, 0, 1, 1, 1, 0, 0, 1, 0, 0, 0, 1, 0, 1, 1, 0, 0]

[1, 1, 1, 0, 0, 0, 0, 1, 1, 0, 0, 0, 0, 0, 1, 0, 0, 1, 0, 1, 0]

[0, 1, 0, 1, 1, 0, 0, 0, 1, 0, 1, 0, 0, 0, 1, 0, 1, 0, 0, 1, 0]

[0, 0, 0, 1, 1, 1, 0, 0, 0, 0, 1, 1, 0, 1, 1, 0, 1, 0, 0, 0, 0]

[0, 0, 1, 0, 1, 1, 1, 0, 0, 0, 0, 0, 0, 1, 0, 0, 0, 0, 0, 0, 1]

[0, 0, 1, 0, 0, 0, 0, 0, 0, 0, 1, 1, 0, 0, 1, 1, 1, 1, 0, 1, 0]

[1, 1, 1, 0, 0, 0, 1, 1, 0, 1, 0, 0, 0, 0, 0, 0, 0, 0, 1, 0, 1]

[0, 1, 0, 0, 1, 0, 0, 1, 0, 1, 0, 0, 1, 0, 0, 0, 0, 0, 0, 1, 1]

[1, 1, 1, 1, 0, 0, 0, 0, 0, 0, 0, 1, 0, 0, 1, 0, 1, 0, 1, 0, 0]

[0, 1, 0, 0, 0, 0, 0, 1, 1, 0, 0, 0, 0, 0, 0, 1, 1, 0, 1, 1, 1]

[0, 0, 0, 0, 0, 1, 1, 1, 0, 1, 0, 0, 1, 0, 0, 1, 0, 0, 1, 0, 1]

[1, 1, 0, 1, 1, 0, 0, 0, 0, 1, 0, 0, 0, 1, 1, 0, 0, 0, 0, 0, 0]

[1, 0, 0, 0, 0, 0, 1, 1, 0, 1, 0, 0, 1, 1, 0, 0, 0, 1, 0, 0, 0]

[0, 1, 0, 1, 1, 1, 0, 0, 0, 1, 0, 0, 0, 0, 0, 0, 1, 0, 1, 0, 1]

[0, 0, 0, 0, 1, 0, 0, 0, 0, 0, 1, 0, 1, 1, 1, 0, 0, 1, 0, 1, 0]

[1, 0, 0, 1, 0, 1, 1, 0, 0, 1, 0, 1, 0, 1, 0, 0, 0, 0, 1, 0, 0]

[1, 0, 0, 1, 0, 0, 0, 0, 1, 0, 1, 1, 0, 1, 1, 0, 0, 1, 0, 0, 0]

\item $R(B_2,B_{10}) \ge 25$

[0, 0, 0, 0, 0, 0, 0, 0, 0, 0, 0, 0, 0, 0, 0, 1, 1, 1, 1, 1, 1, 1, 1, 1]

[0, 0, 0, 0, 0, 0, 0, 0, 0, 0, 0, 1, 1, 1, 1, 0, 0, 0, 0, 1, 1, 1, 1, 1]

[0, 0, 0, 0, 0, 0, 0, 0, 0, 1, 1, 0, 0, 1, 1, 0, 0, 1, 1, 0, 0, 1, 1, 1]

[0, 0, 0, 0, 0, 0, 0, 1, 1, 0, 1, 0, 0, 0, 1, 0, 1, 0, 1, 0, 1, 0, 1, 1]

[0, 0, 0, 0, 0, 0, 1, 0, 1, 0, 0, 0, 1, 0, 1, 1, 0, 0, 1, 0, 1, 1, 0, 1]

[0, 0, 0, 0, 0, 0, 1, 1, 0, 1, 0, 1, 0, 0, 0, 0, 0, 0, 1, 0, 1, 1, 1, 0]

[0, 0, 0, 0, 1, 1, 0, 0, 0, 0, 1, 0, 0, 1, 0, 0, 1, 1, 0, 1, 0, 0, 1, 1]

[0, 0, 0, 1, 0, 1, 0, 0, 0, 0, 0, 0, 1, 1, 0, 1, 0, 1, 0, 1, 0, 1, 0, 1]

[0, 0, 0, 1, 1, 0, 0, 0, 0, 1, 0, 1, 0, 1, 0, 0, 0, 1, 0, 1, 0, 1, 1, 0]

[0, 0, 1, 0, 0, 1, 0, 0, 1, 0, 0, 0, 1, 0, 0, 0, 1, 0, 0, 1, 1, 0, 0, 1]

[0, 0, 1, 1, 0, 0, 1, 0, 0, 0, 0, 1, 1, 0, 0, 1, 0, 0, 0, 1, 1, 1, 0, 0]

[0, 1, 0, 0, 0, 1, 0, 0, 1, 0, 1, 0, 0, 0, 0, 1, 0, 1, 1, 0, 0, 0, 0, 1]

[0, 1, 0, 0, 1, 0, 0, 1, 0, 1, 1, 0, 0, 0, 0, 0, 1, 1, 1, 0, 0, 0, 1, 0]

[0, 1, 1, 0, 0, 0, 1, 1, 1, 0, 0, 0, 0, 0, 0, 1, 1, 0, 1, 0, 1, 0, 0, 0]

[0, 1, 1, 1, 1, 0, 0, 0, 0, 0, 0, 0, 0, 0, 0, 1, 1, 1, 0, 1, 0, 0, 0, 0]

[1, 0, 0, 0, 1, 0, 0, 1, 0, 0, 1, 1, 0, 1, 1, 0, 0, 0, 0, 0, 0, 0, 1, 0]

[1, 0, 0, 1, 0, 0, 1, 0, 0, 1, 0, 0, 1, 1, 1, 0, 0, 0, 0, 0, 0, 1, 0, 0]

[1, 0, 1, 0, 0, 0, 1, 1, 1, 0, 0, 1, 1, 0, 1, 0, 0, 0, 0, 0, 1, 0, 0, 0]

[1, 0, 1, 1, 1, 1, 0, 0, 0, 0, 0, 1, 1, 1, 0, 0, 0, 0, 0, 1, 0, 0, 0, 0]

[1, 1, 0, 0, 0, 0, 1, 1, 1, 1, 1, 0, 0, 0, 1, 0, 0, 0, 1, 0, 0, 0, 0, 0]

[1, 1, 0, 1, 1, 1, 0, 0, 0, 1, 1, 0, 0, 1, 0, 0, 0, 1, 0, 0, 0, 0, 0, 0]

[1, 1, 1, 0, 1, 1, 0, 1, 1, 0, 1, 0, 0, 0, 0, 0, 1, 0, 0, 0, 0, 0, 0, 0]

[1, 1, 1, 1, 0, 1, 1, 0, 1, 0, 0, 0, 1, 0, 0, 1, 0, 0, 0, 0, 0, 0, 0, 0]

[1, 1, 1, 1, 1, 0, 1, 1, 0, 1, 0, 1, 0, 0, 0, 0, 0, 0, 0, 0, 0, 0, 0, 0]

\end{itemize}


\end{document}